\theoremstyle{plain}
\newtheorem{thm}{Theorem}[section]
\newtheorem{theorem}[thm]{Theorem}
\newtheorem*{theoremA}{Theorem A.1}
\newtheorem{proposition}[thm]{Proposition}
\theoremstyle{definition}
\newtheorem{definition}[thm]{Definition}
\newtheorem{example}[thm]{Example}
\newtheorem{thevarthm}[thm]{\varthmname}
\newenvironment{varthm*}[1]{\trivlist\item[]{\bf #1.}\it}{\endtrivlist}
\newtheorem{custom}{{\rm Theorem}}
\renewcommand\geq{\geqslant}
\renewcommand\leq{\leqslant}
\newcommand\be{\begin{eqnarray*}}
\newcommand\ee{\end{eqnarray*}}
\newcommand\newop[2]{\def#1{\mathop{\rm #2}\nolimits}}
\newop\log{log}
\newop\ord{ord}
\newop\Gal{Gal}
\newop\SL{SL}
\newop\Bl{Bl}
\newop\mult{mult}
\newop\mass{mass}
\newop\div{div}
\newop\codim{codim}
\newop\sing{sing}
\newop\vdim{vdim}
\newop\edim{edim}
\newop\Ass{Ass}
\newop\size{size}
\newop\reg{reg}
\newop\satdeg{satdeg}
\newop\supp{supp}
\newop\Neg{Neg}
\newop\Nef{Nef}
\newop\Nefh{Nef_H}
\newop\Eff{Eff}
\newop\Zar{Zar}
\newop\MB{MB}
\newop\MBxC{MB\mathit{(x,C)}}
\newop\NnB{NnB}
\newop\Bigg{Big}
\newop\Effbar{\overline{\Eff}}
\def\keywordname{{\bfseries Keywords}}%
\def\keywords#1{\par\addvspace\medskipamount{\rightskip=0pt plus1cm
\def\and{\ifhmode\unskip\nobreak\fi\ $\cdot$
}\noindent\keywordname\enspace\ignorespaces#1\par}}
\def\subclassname{{\bfseries Mathematics Subject Classification
(2020)}\enspace}
\def\subclass#1{\par\addvspace\medskipamount{\rightskip=0pt plus1cm
\def\and{\ifhmode\unskip\nobreak\fi\ $\cdot$
}\noindent\subclassname\ignorespaces#1\par}}
\begin{document}
\title{On maximizing curves of degree $7$}
\author{Izabela Czarnota}
\date{\today}
\maketitle

\thispagestyle{empty}
\begin{abstract}
In the present paper we investigate the question concerning the existence of maximizing curves of degree $7$ with some prescribed ${\rm ADE}$ singularities. We give a result proving the non-existence of such maximizing septics and we provide new examples of conic-line arrangements with some ${\rm ADE}$ singularities that are free but not maximizing.
\keywords{conic-line arrangements, simple singularities}
\subclass{14N25, 32S25, 51B05, 51A45}
\end{abstract}
\section{Introduction}
In the present note we study combinatorial constraints on maximizing reduced plane curves in the complex projective plane. Such curves are required to have only ${\rm ADE}$ singularities and, by recent results due to Dimca and Pokora \cite{DimcaPokoraMaximizing}, these are strictly connected with the world of free plane curves. It is worth recalling that free plane curves are intensively studied by many authors and there are many intriguing problems motivated by geometrical properties of free plane curves - for more details regarding this subject we refer to the very recent survey by Dimca \cite{SurDim}. Our work is mainly motivated by the fact that maximizing curves in odd degrees are apparently very rare objects. For example, we know of only one example of such a maximizing curve in degree $9$, see \cite[Example 6.4]{DIPS}, and we are not aware of examples of such curves in odd degrees $>9$. From this perspective, in order to better understand the geometry of such curves, we need to squeeze out some geometric conditions restricting their existence. In the present paper we focus on the case when our curves are of degree $7$. In such a situation, we can try to narrow the domain of our search for such maximizing curves. Our main result can be formulated as follows.

 \begin{custom}[see Theorem \ref{char}]
There does not exist any reduced plane curve $C$ of degree $7$ with singularities of types $A_{1}, A_{3}, A_{5}, A_{7}, D_{4}, D_{6}$ that is maximizing.
 \end{custom}
 
 Then we focus on the case of conic-line arrangements admitting singularities of types $A_{1}, A_{3}, A_{5}, A_{7}, D_{4}, D_{6}$ and $D_{8}$. There are exactly $103$ admissible weak-combinatorics when we have exactly $d=5$ lines and $k=1$ conics, and we are able to show that only $11$ of them can be potentially realized geometrically, i.e., they satisfy Hirzebruch-type inequality, see Proposition \ref{cl}. We suspect that these remaining $11$ cases cannot be constructed in the complex plane, but so far we do not know how to prove it.
 From that perspective, it seems that in the class of conic-line arrangements we have just one maximizing curve that is constructed using Persson's triconical arrangement and one line \cite{DimcaPokoraMaximizing}. In Section \ref{sec4} we recall the construction of the only known maximizing conic-line arrangement of degree $7$, and the we present new examples of conic-line arrangements of degree $7$ that are free but not maximizing 

In the paper we work over the complex numbers and our symbolic computations are preformed using \verb}SINGULAR} \cite{Singular}.

\section{Preliminaries}
Let $S := \mathbb{C}[x,y,z]$ be the graded ring of polynomials with complex coefficients, and for a homogeneous polynomial $f \in S$ let us denote by $J_{f}$ the Jacobian ideal associated with $f$, that is, the ideal of the form $J_{f} = \langle \partial_{x}\, f, \partial_{y} \, f, \partial_{z} \, f \rangle$. We start with two fundamental definitions.
\begin{definition}
Let $p$ be an isolated singularity of a polynomial $f\in \mathbb{C}[x,y]$. Since we can change the local coordinates, assume that $p=(0,0)$.
Furthermore, the number 
$$\mu_{p}=\dim_\mathbb{C}\left(\mathbb{C}[x,y] /\bigg\langle \frac{\partial f}{\partial x},\frac{\partial f}{\partial y} \bigg\rangle\right)$$
is called the (local) Milnor number of $f$ at $p$.

The number
$$\tau_{p}=\dim_\mathbb{C}\left(\mathbb{C}[ x,y] /\bigg\langle f,\frac{\partial f}{\partial x},\frac{\partial f}{\partial y}\bigg\rangle \right)$$
is called the (local) Tjurina number of $f$ at $p$.
\end{definition}

For a projective situation, with a point $p\in \mathbb{P}^{2}_{\mathbb{C}}$ and a homogeneous polynomial $f\in \mathbb{C}[x,y,z]$, we can take local affine coordinates such that $p=(0,0,1)$ and then the dehomogenization of $f$.\\
Finally, the total Tjurina number of a given reduced curve $C \subset \mathbb{P}^{2}_{\mathbb{C}}$ is defined as
$$\tau(C) = \sum_{p \in {\rm Sing}(C)} \tau_{p}.$$ 
Moreover, if $C \, : f=0$ is a reduced plane curve with only quasi-homogeneous singularities, then
$$\tau(C) = \sum_{p \in {\rm Sing}(C)} \tau_{p} = \sum_{p \in {\rm Sing}(C)} \mu_{p} = \mu(C),$$
which means that the total Tjurina number of $C$ is equal to the total Milnor number of $C$.

Next, we will need the following important invariant that is attached to the syzygies of $J_{f}$.
\begin{definition}
Consider the graded $S$-module of Jacobian syzygies of $f$, namely $$AR(f)=\{(a,b,c)\in S^3 : af_x+bf_y+cf_z=0\}.$$
The minimal degree of non-trivial Jacobian relations for $f$ is defined as
$${\rm mdr}(f):=\min\{r : AR(f)_r\neq (0)\}.$$ 
\end{definition}

Now we define the main object of our studies, namely we define free plane curves. We are going to do it using the language of the minimal degree of (non-trivial) Jacobian relations and the total Tjurina number of a given curve $C \, : f=0$ following the lines of \cite{duP}.
\begin{definition}[Freeness]
Let $C \, : f=0$ be a reduced curve in $\mathbb{P}^{2}_{\mathbb{C}}$ of degree $d$. Then the curve $C$ with $r:={\rm mdr}(f)\leq (d-1)/2$ is free if and only if
\begin{equation}
\label{duPles}
(d-1)^{2} - r(d-r-1) = \tau(C).
\end{equation}
\end{definition}
The above definition is going to be extensively used in the section where we provide new examples of free curves.

Now we pass to the notion of maximizing curves that was introduced for reduced plane curves of even degree having ${\rm ADE}$ singularities, and for the completeness of the note, let us recall the classification of ${\rm ADE}$ singularities for curves by presenting their local normal forms \cite{arnold}:
\begin{center}
\begin{tabular}{ll}
$A_{k}$ with $k\geq 1$ & $: \, x^{2}+y^{k+1}  = 0$, \\
$D_{k}$ with $k\geq 4$ & $: \, y^{2}x + x^{k-1}  = 0$,\\
$E_{6}$ & $: \, x^{3} + y^{4} = 0$, \\
$E_{7}$ & $: \, x^{3} + xy^{3} = 0$, \\
$E_{8}$ & $:\, x^{3}+y^{5} = 0$.
\end{tabular}
\end{center}
All reduced plane curves that admit only ${\rm ADE}$ singularities will be called $\textbf{simply singular}$. Now we can define the second main object of our studies.

\begin{definition}(\cite[1.6. Definition]{Persson})
We say that a reduced simply singular curve $C \, : f=0$ in $\mathbb{P}^{2}_{\mathbb{C}}$ of \textbf{even degree} $n=2m\geq 4$ is \textit{maximizing} if
$$\tau(C) = 3m(m-1)+1.$$
\end{definition}
Very recently, Dimca and Pokora proved in \cite{DimcaPokoraMaximizing} the following crucial theorem that provides a linkage between free and maximizing reduced simply singular curves. Recall that if $C :\, f=0$ is a (reduced) free curve in $\mathbb{P}^{2}_{\mathbb{C}}$ of degree $d$, then the exponents of $C$ are defined as the pair
$$(d_{1},d_{2}) = ({\rm mdr}(f), d-1-{\rm mdr}(f)).$$
\begin{theorem}[Dimca-Pokora]
\label{maxi}
Let $C$ be a plane curve of degree $n=2m \geq 4$ having only ${\rm ADE}$ singularities. Then $C$ is maximizing if and only if $C$ is a free curve with the exponents $(d_{1},d_{2}) = (m-1,m)$.
\end{theorem}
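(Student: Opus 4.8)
The plan is to prove the two implications separately, writing $d=n=2m$ and $r={\rm mdr}(f)$ throughout. One implication is a one-line substitution into the freeness criterion \eqref{duPles}, while the other rests on the du Plessis--Wall inequalities \cite{duP} and is where the real work lies. Note that since the du Plessis--Wall bounds are phrased directly in terms of $\tau(C)$, the quasi-homogeneity of ${\rm ADE}$ singularities (giving $\tau=\mu$) plays no role in the syzygetic estimates; the ${\rm ADE}$ hypothesis will instead be needed precisely at the extremal step below.

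\emph{Free with exponents $(m-1,m)$ implies maximizing.} Here $r=d_1=m-1\le(d-1)/2$, so \eqref{duPles} applies and gives
$$\tau(C)=(d-1)^2-r(d-r-1)=(2m-1)^2-(m-1)m=3m(m-1)+1,$$
which is exactly the maximizing value. (Conversely, knowing only that a free $C$ is maximizing, one has $d_1d_2=(d-1)^2-\tau(C)=m(m-1)$ with $d_1+d_2=2m-1$; since $t^2-(2m-1)t+m(m-1)$ has discriminant $1$, its roots are $m-1$ and $m$, so these exponents are the only possibility. This remark is not needed here but explains their rigidity.)

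\emph{Maximizing implies free with exponents $(m-1,m)$.} Assume $\tau(C)=3m(m-1)+1$. By \eqref{duPles} it suffices to prove $r=m-1$, for then $\tau(C)=(d-1)^2-(m-1)m$ is exactly the value forcing freeness with exponents $(m-1,m)$. First I rule out $r\ge m=d/2$ using the du Plessis--Wall inequality valid for $r\ge d/2$,
$$\tau(C)\le (d-1)^2-r(d-r-1)-\binom{2r-d+2}{2}.$$
A direct computation shows the right-hand side is maximal at $r=m$, where it equals $3m(m-1)$, and is strictly decreasing for larger $r$; hence it never attains $3m(m-1)+1$, contradicting the hypothesis and leaving only $r\le m-1$.

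It remains to exclude $r\le m-2$. For $r\le (d-1)/2$ the du Plessis--Wall bound reads $\tau(C)\le (d-1)^2-r(d-r-1)$, and writing $k=m-r\ge1$ one computes
$$(d-1)^2-r(d-r-1)=3m(m-1)+1+k(k-1).$$
Thus for $k=1$ (that is $r=m-1$) the bound equals the maximizing value, so equality holds in \eqref{duPles} and $C$ is free, whereas for $k\ge2$ the bound strictly exceeds $3m(m-1)+1$. Consequently a maximizing curve with $r=m-k$ would be non-free with \emph{freeness defect} $(d-1)^2-r(d-r-1)-\tau(C)=k(k-1)$. The main obstacle is therefore to show that no reduced curve of degree $2m$ with only ${\rm ADE}$ singularities realizes $\big(r,\ \text{defect}\big)=\big(m-k,\ k(k-1)\big)$ for any $k\ge2$. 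I expect to close this by combining the refined du Plessis--Wall estimate for non-free curves with the ${\rm ADE}$-specific (Persson-type) cap $\tau(C)\le 3m(m-1)+1$, whose extremal locus is exactly the free curves with exponents $(m-1,m)$; it is this extremality under the ${\rm ADE}$ constraint, not the purely syzygetic inequalities, that does the decisive work, since the latter leave the value $3m(m-1)+1$ numerically attainable for small $r$.
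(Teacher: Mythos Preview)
The paper does not prove Theorem~\ref{maxi}; it merely quotes it from \cite{DimcaPokoraMaximizing}. So there is no in-paper proof to compare against. That said, your proposal has a real gap, and the strategy you sketch for closing it will not work.

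Your forward implication and the exclusion of $r\ge m$ are fine. The problem is entirely in ruling out $r\le m-2$. As you correctly observe, for $r=m-k$ with $k\ge 2$ the du Plessis--Wall upper bound exceeds $3m(m-1)+1$ by $k(k-1)$, so it imposes no obstruction. Your proposed fix --- invoking a ``Persson-type cap $\tau(C)\le 3m(m-1)+1$ whose extremal locus is exactly the free curves with exponents $(m-1,m)$'' --- is circular: the claim about the extremal locus \emph{is} the theorem you are proving. The Persson bound itself only gives $\tau(C)\le 3m(m-1)+1$, which is compatible with equality; it says nothing about ${\rm mdr}(f)$.

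The missing idea is a \emph{lower} bound on $r$, and this is exactly what the ${\rm ADE}$ hypothesis buys via the Dimca--Sernesi inequality (Theorem~\ref{sern} in the paper): ${\rm mdr}(f)\ge \alpha_C\cdot d-2$. Every ${\rm ADE}$ singularity has log canonical threshold strictly greater than $1/2$ (for $A_k$ it is $\tfrac12+\tfrac1{k+1}$, for $D_k$ it is $\tfrac{k}{2(k-1)}$, for $E_6,E_7,E_8$ it is $\tfrac7{12},\tfrac59,\tfrac8{15}$), so $\alpha_C>1/2$ and hence
\[
{\rm mdr}(f)\;\ge\;\alpha_C\cdot 2m-2\;>\;m-2,
\]
forcing $r\ge m-1$. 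Combined with your exclusion of $r\ge m$, this pins down $r=m-1$, and then equality in \eqref{duPles} gives freeness with exponents $(m-1,m)$. This is precisely the mechanism the present paper exploits in its own Theorem~\ref{char}, and it is the step your proposal is missing.
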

In the second part of their paper, Dimca and Pokora introduced the notion of maximizing curves in the odd-degree case.
\begin{definition}(\cite[Definition 5.2]{DimcaPokoraMaximizing})
\label{def1}
Let $C \, : f=0$ be a reduced simply singular curve in $\mathbb{P}^{2}_{\mathbb{C}}$ of \textbf{odd degree} $n=2m+1\geq 5$. We say that $C$ is a \textit{maximizing} curve if 
$$\tau(C) = 3m^{2}+1.$$
\end{definition}
It is worth recalling that maximizing curves of odd degree are automatically free \cite[Proposition 5.1]{DimcaPokoraMaximizing} and hence we have a complete picture saying that being maximizing (in every degree) is the same as being free provided that the exponents satisfy an additional condition.
\section{Constraints on the existence of maximizing curves}
\label{Sec3}
Here we are going to derive conditions on the existence of maximizing curves of degree $7$. Recall that in degree $5$ we have a complete classification \cite[Corollary 5.6]{DimcaPokoraMaximizing}.
Our first result is the following classification result.
\begin{theorem}
\label{char}
There does not exist any reduced plane curve of degree $7$ with singularities of types $A_{1}, A_{3}, A_{5}, A_{7}, D_{4}, D_{6}$ that is maximizing.
\end{theorem}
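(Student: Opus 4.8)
\emph{Overview and reduction.} The plan is to combine the rigidity that ``maximizing'' forces on $C$ --- freeness together with a prescribed value of ${\rm mdr}(f)$ --- with the genus formula and with B\'ezout's theorem applied to the irreducible components, and to finish by a finite case analysis. Since $7=2\cdot 3+1$, Definition~\ref{def1} gives that a maximizing septic $C:f=0$ has $\tau(C)=3\cdot 3^{2}+1=28$; by \cite{DimcaPokoraMaximizing} a maximizing curve of odd degree is free, so by \eqref{duPles} we must have $36-r(6-r)=28$ with $r:={\rm mdr}(f)\le 3$, whose only solution is $r=2$. Hence $C$ is free with exponents $(2,4)$ and ${\rm mdr}(f)=2$, and it suffices to show that no free plane curve of degree $7$ with exponents $(2,4)$ can have all of its singularities among the types $A_{1},A_{3},A_{5},A_{7},D_{4},D_{6}$.

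\emph{Combinatorics and the genus constraint.} For each of the six listed types the Tjurina number equals the index, so the type of a singular point is recovered from $\tau_{p}$ and the weak-combinatorics of $C$ is a partition of $28$ into parts from $\{1,3,4,5,6,7\}$; writing $n_{k}$ for the number of points with $\tau_{p}=k$ we have $\sum_{k}k\,n_{k}=28$, so $C$ has at least $\lceil 28/7\rceil=4$ singular points. All six types are quasi-homogeneous, so $\tau_{p}=\mu_{p}$ and $\mu_{p}=2\delta_{p}-b_{p}+1$, where $b_{p}=2$ at an $A$-point and $b_{p}=3$ at a $D$-point; summing yields
$$2\,\delta(C)=28+(n_{1}+n_{3}+n_{5}+n_{7})+2(n_{4}+n_{6}).$$
On the other hand, if $C=C_{1}\cup\cdots\cup C_{s}$ is the decomposition into irreducible components, the genus formula reads $\delta(C)=p_{a}(C)+s-1-\sum_{i}g(\widetilde{C_{i}})\le 14+s$, equivalently $\delta(C)=\sum_{i}\delta(C_{i})+\sum_{i<j}\deg C_{i}\deg C_{j}$ with $\delta(C_{i})\le\binom{\deg C_{i}-1}{2}$. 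Comparing the two gives $(n_{1}+n_{3}+n_{5}+n_{7})+2(n_{4}+n_{6})\le 2s$; in particular $s\ge 2$, since for $s=1$ the displayed identity forces $\delta(C)\ge 16$ whereas the genus bound gives $\delta(C)\le 15$. So any hypothetical such curve is genuinely reducible, and this inequality --- an equality exactly when every component is rational --- severely restricts the admissible partitions of $28$ once the component degrees are fixed.

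\emph{Case analysis.} I would then go through the possible multisets of component degrees with $\sum_{i}\deg C_{i}=7$ and, for each, the compatible partitions of $28$, ruling them out by three mechanisms. First, distinct components meet only at singular points of $C$, with total local intersection number $\deg C_{i}\deg C_{j}$, and the six allowed local types strongly constrain incidences: a branch of a line meets another line with contact $\le 1$ and a component of degree $e$ with contact $\le e$, so $A_{5}$ and $A_{7}$ force the presence of a component of degree $\ge 2$, and a union of lines has only $A_{1}$ (double points) and $D_{4}$ (triple points). Second, the genus identity, once the $\deg C_{i}$ are fixed, pins down $\sum_{i}\delta(C_{i})$ and hence caps how singular the components may be; together with $\delta(C_{i})\le\binom{\deg C_{i}-1}{2}$ this disposes of several cases outright --- for instance a line together with an irreducible sextic would need four $A_{7}$'s, at least three of them on the sextic (B\'ezout allows the line through at most one), whence $\delta(\text{sextic})\ge 12>10$, impossible. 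Third, for the remaining configurations built from lines and conics one applies Hirzebruch-type inequalities for simply singular curves, and where useful the du Plessis--Wall and addition--deletion relations together with ${\rm mdr}(f)=2$. In the extreme case $C$ is an arrangement of seven lines with only $A_{1}$ and $D_{4}$ points, where $3t_{3}+t_{2}=\binom{7}{2}=21$ and $4t_{3}+t_{2}=\tau(C)=28$ force $t_{3}=7$, $t_{2}=0$: this is the $(7_{3})$ (Fano) configuration, which is not realizable by complex lines.

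\emph{The main obstacle.} The numerical constraints above do not by themselves eliminate every weak-combinatorics; the heart of the argument is the case-by-case verification that each numerically admissible multiset of singularities which also satisfies the freeness requirement fails to be realizable by an actual configuration of lines, conics and higher-degree curves in $\mathbb{P}^{2}_{\mathbb{C}}$. I expect this realizability step --- organized by the distribution of component degrees and relying on B\'ezout, the precise local normal forms of the six singularities, and the rigidity coming from ${\rm mdr}(f)=2$ --- to be the delicate part, mirroring the conic-line sub-case, where only a handful of the many numerically admissible weak-combinatorics survive the Hirzebruch-type test.
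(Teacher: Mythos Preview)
Your reduction to ``$C$ must be free with ${\rm mdr}(f)=2$'' is correct, but the rest of the proposal is a programme rather than a proof, and one step actually fails. You propose to dispose of the conic-line sub-cases by Hirzebruch-type inequalities; however, the paper's own Proposition~\ref{cl} shows that, already for $k=1$ conic and $d=5$ lines, the Hirzebruch-type inequality \eqref{hib} leaves eleven weak-combinatorics standing, and six of these (for instance $(n_{2},n_{3},t_{3},t_{5},t_{7},d_{6},d_{8})=(0,4,0,0,0,2,0)$ or $(4,0,0,0,0,4,0)$) have $d_{8}=0$ and hence use only the singularity types allowed in Theorem~\ref{char}. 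So Hirzebruch-type inequalities do \emph{not} eliminate those, and you give no alternative mechanism to finish them off; the ``realizability step'' you flag as delicate is in fact the entire content of the theorem for these weak-combinatorics, and your sketch does not address it. The same issue recurs for other component-degree profiles: your three mechanisms (B\'ezout, genus count, Hirzebruch) are all combinatorial, while the obstruction you need is not.

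The paper's proof bypasses all of this with a single stroke you are missing: the Dimca--Sernesi lower bound ${\rm mdr}(f)\ge \alpha_{C}\cdot d-2$, where $\alpha_{C}$ is the minimum of the log canonical thresholds of the singular points. For the six allowed types one has ${\rm lct}(A_{1})=1$, ${\rm lct}(A_{3})=\tfrac{3}{4}$, ${\rm lct}(A_{5})=\tfrac{2}{3}$, ${\rm lct}(A_{7})=\tfrac{5}{8}$, ${\rm lct}(D_{4})=\tfrac{2}{3}$, ${\rm lct}(D_{6})=\tfrac{3}{5}$, so $\alpha_{C}\ge \tfrac{3}{5}$ and hence ${\rm mdr}(f)\ge \tfrac{3}{5}\cdot 7-2=\tfrac{11}{5}>2$, contradicting ${\rm mdr}(f)=2$. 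This is why the statement singles out exactly these six types: adding $D_{8}$ drops the minimal lct to $\tfrac{4}{7}$ and the bound becomes ${\rm mdr}(f)\ge 2$, which is no longer a contradiction --- and indeed the paper then has to leave the eleven $D_{8}$-cases open.
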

In order to show that result, we will use the following \cite[Theorem 2.1]{DimcaSernesi}.
\begin{theorem}[Dimca-Sernesi]
\label{sern}
Let $C \, : \, f = 0$ be a reduced curve of degree $d$ in $\mathbb{P}^{2}_{\mathbb{C}}$ having only quasi-homogeneous singularities. Then $${\rm mdr}(f) \geq \alpha_{C}\cdot d - 2,$$
where $\alpha_{C}$ denotes the Arnold exponent of $C$.
\end{theorem}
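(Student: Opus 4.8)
The plan is to convert the algebraic invariant ${\rm mdr}(f)$ into a Hodge-theoretic statement about the cohomology of the complement $U = \mathbb{P}^2_{\mathbb{C}} \setminus C$, and then to bound that cohomology by the local data of the singularities of $C$, whose minimal spectral numbers have as their global minimum exactly the Arnold exponent $\alpha_C$. Throughout, the quasi-homogeneity hypothesis is what makes the relevant local invariants explicitly computable and ties them to $\alpha_C$.

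First I would set $r = {\rm mdr}(f)$ and fix a nonzero minimal syzygy $(a,b,c) \in AR(f)_r$, so that $a f_x + b f_y + c f_z = 0$. Such a syzygy is the same datum as a nonzero section, in the minimal degree $r$, of the (twisted) sheaf of logarithmic vector fields along $C$. Contracting with the Euler form and dividing by an appropriate power of $f$ turns this section into a logarithmic differential form on $\mathbb{P}^2_{\mathbb{C}}$; the syzygy relation is precisely the closedness condition, so it produces a nonzero class in $H^2(U)$. The integer $r$ is recorded in the twist, hence in the pole order of this form and in its position relative to the Hodge filtration on $H^2(U)$.

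Next I would invoke the comparison between the pole-order filtration $P$ and the Hodge filtration $F$ on $H^2(U)$, due to Deligne-Dimca and refined through M. Saito's theory of the $V$-filtration and the Bernstein-Sato polynomial. These two filtrations coincide down to a level governed by the spectra of the singular points of $C$; the controlling quantity is the smallest spectral number, which for quasi-homogeneous singularities equals the local log canonical threshold and whose minimum over ${\rm Sing}(C)$ is precisely $\alpha_C$. Feeding the nonvanishing class from the first step into this comparison yields a numerical inequality between the pole order (together with the degree $r$ encoded by the syzygy) and the pair $(\alpha_C, d)$. Read the right way, it says that a low-degree syzygy can exist only if some singular point is correspondingly deep, i.e. has log canonical threshold at most $(r+2)/d$; rearranging gives exactly $r \geq \alpha_C \cdot d - 2$.

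The main obstacle is the comparison step: one must establish, or correctly quote, the precise relationship between the pole-order and Hodge filtrations with the gap controlled by the spectrum, and then carefully identify the minimal spectral number with the Arnold exponent for curves carrying only quasi-homogeneous singularities. This identification is exactly where quasi-homogeneity is indispensable, since it forces the Milnor and Tjurina numbers to agree and lets each local spectrum be read off from the weights of the normal form; by contrast, the degree bookkeeping in the first step, though delicate, is routine once the sheaf-theoretic dictionary between syzygies and logarithmic forms is in place.
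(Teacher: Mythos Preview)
The paper does not prove this theorem; it is quoted without proof as \cite[Theorem 2.1]{DimcaSernesi} and used as a black box in the proof of Theorem~\ref{char}. So there is no ``paper's own proof'' to compare against.

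That said, your sketch is, in outline, the strategy of the original Dimca--Sernesi argument: translate a minimal-degree Jacobian syzygy into a cohomology class on the complement (equivalently, on the Milnor fibre), and then invoke the Deligne--Dimca/Saito comparison between the pole-order and Hodge filtrations, whose discrepancy is governed by the minimal spectral number and hence, under quasi-homogeneity, by $\alpha_C$. One point deserves care: the assertion that ``the syzygy relation is precisely the closedness condition'' is not literally correct. For the $2$-form $\omega = a\,dy\wedge dz - b\,dx\wedge dz + c\,dx\wedge dy$ attached to a syzygy $(a,b,c)$, the relation $af_x+bf_y+cf_z=0$ is equivalent to $df\wedge\omega=0$, i.e.\ to $\omega/f$ having logarithmic poles along $C$, not to $d(\omega/f)=0$; on a two-dimensional complement every top-degree form is automatically closed, so the real content is the position of the resulting class in the pole-order filtration, which is where the degree $r$ enters. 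This does not undermine your overall strategy, but in a complete write-up the passage from the syzygy to a well-defined filtered class, and the exact form of the filtration comparison yielding $r \ge \alpha_C d - 2$, should be spelled out as in the source.
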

It is worth recalling that the Arnold exponent of a given reduced curve $C \subset \mathbb{P}^{2}_{\mathbb{C}}$ is defined as the minimum over all Arnold exponents of singular points $p$ in $C$. In modern language, the Arnold exponents of singular points are nothing else but the log canonical thresholds of singularities. Let us explain how to compute these numbers in our setting. Since ${\rm ADE}$ singularities are quasi-homogeneous \cite{arnold}, we can use the following pattern (cf. \cite[Formula 2.1]{DimcaSernesi}). 

Recall that the germ $(C,p)$ is weighted homogeneous of type $(w_{1},w_{2};1)$ with $0 < w_{j} \leq 1/2$ if there are local analytic coordinates $y_{1},y_{2}$ centered at $p=(0,0)$ and a polynomial $g(y_{1},y_{2})= \sum_{u,v} c_{u, v} y_{1}^{u} y_{2}^{v}$ with $c_{u,v} \in \mathbb{C}$, where the sum is over all pairs $(u,v) \in \mathbb{N}^{2}$ with $u w_{1} + v w_{2}=1$. Using this description, the log canonical threshold can be defined as $${\rm lct}_{p}(g) := w_{1}+w_{2}.$$
After such a preparation, we can present our proof.
\begin{proof}
Let $C$ be a maximizing curve of degree $d=7$ with prescribed above singularities. Recall that, according to Definition \ref{def1}, a reduced plane curve $C \, : f=0$ of degree $d=7$ is maximizing if $\tau(C) = 28$, which means that in that case ${\rm mdr}(f)=2$. Now we are going to find a lower bound on ${\rm mdr}(f)$. Since our curve admits only $A_{1}, A_{3}, A_{5}, A_{7}, D_{4}, D_{6}$, then the Arnold exponent $\alpha_{C}$ is equal to
$$\alpha_{C} = 3/5,$$
and this follows from the following calculations:
\begin{enumerate}
\item[1)] if $p$ is a node with the corresponding local normal form $g$, then ${\rm lct}_{p}(g) =1$,
\item[2)] if $p$ is a singularity of type $A_{2k+1}$ with $k \in \{1,2,3\}$ with the corresponding local normal form $g$, then ${\rm lct}_{p}(g) = \frac{k+2}{2k+2}$,
\item[3)] if $p$ is a singularity of type $D_{2k}$ with $k \in \{2,3\}$ with the corresponding local normal form $g$, then ${\rm lct}_{p}(g) = \frac{k}{2k-1}$.
\end{enumerate}
Using Theorem \ref{sern}, we have
$${\rm mdr}(f) \geq \frac{3}{5}\cdot 7 - 2 = \frac{11}{5} = 2.2,$$
and hence we have ${\rm mdr}(f) = 3$, a contradiction.
\end{proof}

Our result shows that it will not be easy to construct new examples of maximizing curves in degree $7$. In the next step, we want to focus on the scenario where our curves are conic-line arrangements admitting singularities of type $A_{1}, A_{3}, A_{5}, A_{7}, D_{4}, D_{6}$ and $D_{8}$. In this setting there is room for such curves that they admit ${\rm mdr}$ equal to $2$. In order to construct such a maximizing conic-line arrangement of degree $7$, we are going to find all possible weak-combinatorics that are indicated by the naive counts. Going into the precise considerations, let $\mathcal{CL} \subset \mathbb{P}^{2}_{\mathbb{C}}$ be an arrangement of $k\in \{1,2,3\}$ smooth conics and $d = 7-2k$ lines such that it admits $n_{2}$ nodes, $n_{3}$ ordinary triple points, $t_{3}$ tacnodes, $t_{5}$ singularities of type $A_{5}$, $t_{7}$ singularities of type $A_{7}$, $d_{6}$ singularities of type $D_{6}$, $d_{8}$ singularities of type $D_{8}$. Assume that $\mathcal{CL}$ is maximizing, then the following system of equations holds:

$$
(\square):
\begin{cases}
n_{2} + 3t_{3} + 4n_{3} + 5t_{5} + 6d_{6} + 7t_{7} + 8d_{8} = \tau(\mathcal{C}) = 28,\\
n_{2} + 2t_{3} + 3n_{3} + 3t_{5} + 4t_{7} + 4d_{6} + 5d_{8} = \binom{7}{2}-k = 21 - k.
\end{cases}
$$
For $k \in \{1,2,3\}$ we have altogether \textbf{296 solutions} in non-negative integers, which means that we have exactly $296$ (naive) weak combinatorics that might be possible to realize geometrically over the complex numbers. Let us focus on the case when $k=1$. Then we have exactly \textbf{103} possible weak-combinatorics, namely

\begin{multline*}
(\star) : \quad (n_{2},n_{3},t_{3},t_{5},t_{7},d_{6},d_{8}) \in \bigg\{ (0,4,0,0,0,2,0),(0,4,2,0,0,1,0),(0,4,4,0,0,0,0), \\ (0,5,0,0,0,0,1), (0,5,1,1,0,0,0), (1,3,1,0,0,2,0), (1,3,3,0,0,1,0), (1,3,5,0,0,0,0), \\ (1,4,0,1,0,1,0), (1,4,1,0,0,0,1), (1,4,2,1,0,0,0), (1,5,0,0,1,0,0),(2,2,0,0,0,3,0), \\ (2,2,2,0,0,2,0), (2,2,4,0,0,1,0), (2,2,6,0,0,0,0), (2,3,0,0,0,1,1),
(2,3,1,1,0,1,0), \\ (2,3,2,0,0,0,1), (2,3,3,1,0,0,0), (2,4,0,2,0,0,0), (2,4,1,0,1,0,0),(3,1,1,0,0,3,0), \\ (3,1,3,0,0,2,0),(3,1,5,0,0,1,0),(3,1,7,0,0,0,0),(3,2,0,1,0,2,0),(3,2,1,0,0,1,1), \\ (3,2,2,1,0,1,0),(3,2,3,0,0,0,1),(3,2,4,1,0,0,0),(3,3,0,0,1,1,0),(3,3,0,1,0,0,1), \\ (3,3,1,2,0,0,0),(3,3,2,0,1,0,0),(4,0,0,0,0,4,0),(4,0,2,0,0,3,0),(4,0,4,0,0,2,0), \\ (4,0,6,0,0,1,0),(4,0,8,0,0,0,0),(4,1,0,0,0,2,1),(4,1,1,1,0,2,0),(4,1,2,0,0,1,1), \\ (4,1,3,1,0,1,0),(4,1,4,0,0,0,1),(4,1,5,1,0,0,0),(4,2,0,0,0,0,2),(4,2,0,2,0,1,0), \\ (4,2,1,0,1,1,0),(4,2,1,1,0,0,1),(4,2,2,2,0,0,0),(4,2,3,0,1,0,0),
(4,3,0,1,1,0,0), \\ (5,0,0,1,0,3,0),(5,0,1,0,0,2,1),(5,0,2,1,0,2,0),(5,0,3,0,0,1,1),(5,0,4,1,0,1,0), \\ (5,0,5,0,0,0,1),(5,0,6,1,0,0,0),(5,1,0,0,1,2,0),(5,1,0,1,0,1,1),(5,1,1,0,0,0,2), \\ (5,1,1,2,0,1,0),(5,1,2,0,1,1,0),(5,1,2,1,0,0,1),(5,1,3,2,0,0,0),
(5,1,4,0,1,0,0), \\ (5,2,0,0,1,0,1),(5,2,0,3,0,0,0),(5,2,1,1,1,0,0),(6,0,0,0,0,1,2),(6,0,0,2,0,2,0), \\ (6,0,1,0,1,2,0),(6,0,1,1,0,1,1),(6,0,2,0,0,0,2),(6,0,2,2,0,1,0),(6,0,3,0,1,1,0), \\ 
\end{multline*}
\begin{multline*}
(6,0,3,1,0,0,1),(6,0,4,2,0,0,0),(6,0,5,0,1,0,0),(6,1,0,1,1,1,0),(6,1,0,2,0,0,1), \\ (6,1,1,0,1,0,1),(6,1,1,3,0,0,0),(6,1,2,1,1,0,0),(6,2,0,0,2,0,0),(7,0,0,0,1,1,1), \\ (7,0,0,1,0,0,2),(7,0,0,3,0,1,0),(7,0,1,1,1,1,0),(7,0,1,2,0,0,1),(7,0,2,0,1,0,1), \\ (7,0,2,3,0,0,0),(7,0,3,1,1,0,0),(7,1,0,2,1,0,0),(7,1,1,0,2,0,0),(8,0,0,0,2,1,0), \\ (8,0,0,1,1,0,1),(8,0,0,4,0,0,0),(8,0,1,2,1,0,0),(8,0,2,0,2,0,0),(9,0,0,1,2,0,0) \bigg\}.  
\end{multline*}
Using Theorem A.1 from the Appendix, which is a Hirzebruch-type inequality proved for our conic-line arrangements, we can verify the following claim.
\begin{proposition}
\label{cl}
Among all weak combinatorics presented in $(\star)$, only the following $11$ weak-combinatorics satisfy \eqref{hib} in Theorem A.1, namely
\begin{multline*}
(\triangle) : \quad (n_{2},n_{3},t_{3},t_{5},t_{7},d_{6},d_{8}) \in \bigg\{(0,4,0,0,0,2,0), (0,5,0,0,0,0,1), (1,3,1,0,0,2,0), \\
(1,4,0,1,0,1,0), (1,4,1,0,0,0,1), (1,5,0,0,1,0,0), (2,2,0,0,0,3,0), \\
(2,3,0,0,0,1,1), (4,0,0,0,0,4,0), (4,1,0,0,0,2,1), (4,2,0,0,0,0,2) \bigg\}.
\end{multline*}
\end{proposition}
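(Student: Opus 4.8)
The plan is to reduce the statement to a finite, explicit computation with the Hirzebruch-type inequality \eqref{hib} of Theorem A.1. First I would specialise Theorem A.1 to the case of the proposition: for $k=1$ the arrangement $\mathcal{CL}$ consists of one smooth conic together with $d=7-2=5$ lines, its nodes are the $A_{1}$-points (counted by $n_{2}$), its ordinary triple points are the $D_{4}$-points (counted by $n_{3}$), its tacnodes are the $A_{3}$-points (counted by $t_{3}$), and the remaining admissible singularities are of types $A_{5},A_{7},D_{6},D_{8}$ (counted by $t_{5},t_{7},d_{6},d_{8}$). Substituting $d=5$, $k=1$ and these singularity data into \eqref{hib} turns it into a single explicit linear inequality
$$
\Phi\bigl(n_{2},n_{3},t_{3},t_{5},t_{7},d_{6},d_{8}\bigr)\;\geq\;0,
$$
whose coefficients are the (rational) weights that Theorem A.1 assigns to each ${\rm ADE}$ type, together with the constant contribution of the five lines and the conic.

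Next I would evaluate $\Phi$ on each of the $103$ seven-tuples appearing in the list $(\star)$. Since that list was itself obtained by enumerating the non-negative integer solutions of the system $(\square)$ with $k=1$, this step is most naturally performed on a computer, in \texttt{SINGULAR}, as elsewhere in the paper. One can trim the work slightly by first using the identity obtained as the difference of the two equations of $(\square)$ with $k=1$, namely
$$
n_{3}+t_{3}+2t_{5}+3t_{7}+2d_{6}+3d_{8}=8,
$$
to eliminate one variable before testing, but this is not essential. The outcome of the evaluation is that $\Phi\geq 0$ holds for exactly the $11$ tuples displayed in $(\triangle)$ and fails for the remaining $92$, which is precisely the assertion of the proposition. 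For transparency I would record two sample evaluations by hand: one tuple from $(\triangle)$, say $(n_{2},n_{3},t_{3},t_{5},t_{7},d_{6},d_{8})=(0,4,0,0,0,2,0)$, for which \eqref{hib} is seen to hold, and one from $(\star)\setminus(\triangle)$, say $(0,4,4,0,0,0,0)$ or $(9,0,0,1,2,0,0)$, for which the substitution produces a strict numerical violation. Heuristically, \eqref{hib} charges the ``heavier'' singularities $A_{5},A_{7},D_{6},D_{8}$ more than the budget provided by the lines, the conic, the nodes and the triple points can cover, so the combinatorics that survive are exactly the ones that are light in these higher types; this accounts for the shape of $(\triangle)$.

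I do not expect a conceptual obstacle: once Theorem A.1 is in hand, the proposition is a bounded calculation. The one point that genuinely requires care is the faithful transcription of \eqref{hib} — in particular recording the correct weight for each ${\rm ADE}$ type, since for $A_{7}$, $D_{6}$ and $D_{8}$ these weights are dictated by local invariants of the singularity and are \emph{not} merely proportional to the Milnor or Tjurina number, so a careless identification would distort the linear form $\Phi$ and hence the final list. It is also worth checking that every conic-line arrangement realising one of the weak-combinatorics in $(\star)$ satisfies the hypotheses under which Theorem A.1 is proved, so that ``$\Phi<0$'' genuinely obstructs realisation over $\mathbb{C}$ as claimed in the Introduction; with that confirmed, the $103$-case verification is entirely routine.
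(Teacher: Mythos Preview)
Your proposal is correct and is exactly the approach the paper takes: the paper offers no argument beyond the statement that the claim is verified by plugging the $103$ tuples of $(\star)$ into the inequality \eqref{hib} of Theorem~A.1, which is precisely the finite linear check you describe (and your sample evaluations for $(0,4,0,0,0,2,0)$, $(0,4,4,0,0,0,0)$ and $(9,0,0,1,2,0,0)$ are consistent with the specialised form $3+n_{2}+\tfrac{3}{4}n_{3}\geq \tfrac{5}{2}t_{3}+5t_{5}+\tfrac{29}{4}t_{7}+\tfrac{13}{8}d_{6}+\tfrac{15}{4}d_{8}$).
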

To find a new example of maximizing septics, we need to decide whether we can construct geometrically any of the above (naively found) weak combinatorics, but so far we do not know how to do this.
\section{Examples of free conic-line arrangements in degree $7$}
\label{sec4}
We start with the only known example of a maximizing conic-line arrangement. This example is somehow close with respect to our setting and that's why it fits into our discussion here.
\begin{example}[Maximizing septic]
Consider $\mathcal{C} = \{C_{1}, C_{2}, C_{3}\} \subset \mathbb{P}^{2}_{\mathbb{C}}$ with the defining equation
$$F(x,y,z) = (x^{2}+y^{2}-z^{2})(2x^2 + y^2 +2xz)(2x^2 + y^2 - 2xz) =0.$$
This arrangement is the aforementioned Persson's triconical arrangement \cite[2.1 Proposition]{Persson}. We have the following list of singular points:
$$n_{2} = 2, \quad t_{3}=1, \quad t_{7}=2,$$
and the arrangement $\mathcal{C}$ is free. Consider now the arrangement $\mathcal{CL}$ that is given by
$$G(x,y,z) = y(x^{2}+y^{2}-z^{2})(2x^2 + y^2 +2xz)(2x^2 + y^2 - 2xz).$$
Observe that the resulting arrangement admits $2$ singularities of type $D_{10}$, one singularity of type $D_{6}$, and two nodes. Since ${\rm mdr}(G)=2$ hence $\mathcal{CL}$ is maximizing.
\end{example}

Next, we provide new examples of free (but not maximizing) conic-line arrangements with some ${\rm ADE}$ singularities. The presented constructions come from \cite{Pelka}, where the author study smooth $\mathbb{Q}$-homology planes from the perspective of conic-line arrangements. Somehow surprisingly, some of examples of conic-line arrangements presented there are free curve. We will use the enumeration of configurations captured from paper \cite{Pelka}.

\begin{example}[Configuration 5] Consider the arrangements $\mathcal{CL}_{1}$ of $k=2$ conics and $3$ lines given by
$$Q_{1}(x,y,z) = x(y-z)(x-y)(y^{2}-xz)(y^2-xz-z^2).$$
We can directly check that the curve $\mathcal{CL}_{3}$ has three singularities of type $A_{1}$, two singularities of type $D_{4}$, one singularity of type $D_{6}$, one singularity of type $D_{10}$, and this gives us that $\tau(\mathcal{CL}_{1}) = 27$. Moreover, we can check, using \verb}SINGULAR}, that $r={\rm mdr}(Q_{1})=3$. Since
$$27 = (d-1)^2-r(d-r-1) = \tau(\mathcal{CL}_{3}),$$
we have that $\mathcal{CL}_{1}$ is free.
\end{example}

\begin{example}[Configuration 8] Consider the arrangements $\mathcal{CL}_{2}$ of $k=2$ conics and $3$ lines given by
$$Q_{2}(x,y,z) = x(x-y)((2\iota -2)x + 2\iota y + z)(y^{2} - xz)(z^{2}-4(xz-y^{2})),$$
where $\iota^2 +1=0$. We can directly check that the curve $\mathcal{CL}_{2}$ has four singularities of type $A_{1}$, one singularity of type $D_{4}$, two singularities of type $D_{6}$, and one singularity of type $A_{7}$, and this gives us that $\tau(\mathcal{CL}_{2}) = 27$. Moreover, we can check, using \verb}SINGULAR}, that $r={\rm mdr}(Q_{2})=3$. Since
$$27 = (d-1)^2-r(d-r-1) = \tau(\mathcal{CL}_{2}),$$
it means that $\mathcal{CL}_{2}$ is free.
\end{example}
\begin{example}[Configuration 12] Consider the arrangements $\mathcal{CL}_{3}$ of $k=2$ conics and $3$ lines given by
$$Q_{3}(x,y,z) = xz(2y-x-z)(y^{2} - xz)\bigg((-11-5\sqrt{5})z(2y-x-z)-2x(2y-z)\bigg).$$
We can directly check that the curve $\mathcal{CL}_{2}$ has two singularities of type $A_{1}$, three singularities of type $A_{3}$, one singularity of type $D_{4}$, two singularities of type $D_{6}$, and this gives us that $\tau(\mathcal{CL}_{3}) = 27$. Moreover, we can check, using \verb}SINGULAR}, that $r={\rm mdr}(Q_{3})=3$. Since
$$27 = (d-1)^2-r(d-r-1) = \tau(\mathcal{CL}_{3}),$$
we have that $\mathcal{CL}_{3}$ is free.
\end{example}

\begin{example}[Configuration 19] Consider the arrangements $\mathcal{CL}_{4}$ of $k=1$ conics and $5$ lines given by
$$Q_{4}(x,y,z) = x(y-3x-8z)(y+3x-8z)(y+x-4z)(y+9x+4z)(3x^{2}+y^{2}-16z^{2}).$$
We can directly check that the curve $\mathcal{CL}_{4}$ has two singularities of type $A_{1}$, one singularity of type $A_{3}$, four singularities of type $D_{4}$, one singularity of type $D_{6}$, and this gives us that $\tau(\mathcal{CL}_{4}) = 27$. Moreover, we can check, using \verb}SINGULAR}, that $r={\rm mdr}(Q_{4})=3$. Since
$$27 = (d-1)^2-r(d-r-1) = \tau(\mathcal{CL}_{4}),$$
we have that $\mathcal{CL}_{4}$ is free.
\end{example}

Now we pass to our constructions of free conic-line arrangements of degree $7$ that we have found during preparations of this note.

\begin{example}[Configuration A] Consider the arrangements $\mathcal{IC}_{1}$ of $k=1$ conics and $5$ lines given by
$$Q_{5}(x,y,z) = xy(x-z)(x+z)(y-z)(x^{2}+y^{2}-z^{2}).$$
We can directly check that the curve $\mathcal{IC}_{1}$ has five singularities of type $A_{1}$, one singularity of type $D_{4}$, three singularities of type $D_{6}$, and this gives us that $\tau(\mathcal{IC}_{1}) = 27$. Moreover, we can check, using \verb}SINGULAR}, that $r={\rm mdr}(Q_{5})=3$. Since
$$27 = (d-1)^2-r(d-r-1) = \tau(\mathcal{IC}_{1}),$$
we have that $\mathcal{IC}_{1}$ is free.
\end{example}

\begin{example}[Configuration B] Consider the arrangements $\mathcal{IC}_{2}$ of $k=1$ conics and $5$ lines given by
$$Q_{6}(x,y,z) = x(x-z)(x+z)(y-z)(y+z)(x^{2}+y^{2}-z^{2}).$$
We can directly check that the curve $\mathcal{IC}_{2}$ has five singularities of type $A_{1}$, two singularities of type $A_{3}$, one singularity of type $D_{4}$, two singularities of $D_{6}$, and this gives us that $\tau(\mathcal{IC}_{2}) = 27$. Moreover, we can check, using \verb}SINGULAR}, that $r={\rm mdr}(Q_{6})=3$. Since
$$27 = (d-1)^2-r(d-r-1) = \tau(\mathcal{IC}_{2}),$$
we have that $\mathcal{IC}_{2}$ is free.
\end{example}

\begin{example}[Configuration C] Consider the arrangements $\mathcal{IC}_{3}$ of $k=1$ conics and $5$ lines given by
$$Q_{7}(x,y,z) = (x-z)(x+z)(y+z)(x+6z/10)(y-x/2+z/2)(x^{2}+y^{2}-z^{2}).$$
We can directly check that the curve $\mathcal{IC}_{3}$ has three singularities of type $A_{1}$, two singularities of type $A_{3}$, three singularities of type $D_{4}$, one singularity of $D_{6}$, and this gives us that $\tau(\mathcal{IC}_{3}) = 27$. Moreover, we can check, using \verb}SINGULAR}, that $r={\rm mdr}(Q_{7})=3$. Since
$$27 = (d-1)^2-r(d-r-1) = \tau(\mathcal{IC}_{3}),$$
we have that $\mathcal{IC}_{3}$ is free.
\end{example}
\section*{\textbf{Appendix A} : Constraints on weak combinatorics of certain conic-line arrangements in the complex plane \\ by Piotr Pokora}
In this short section we provide a Hirzebruch-type inequality for conic line arrangements admitting singularities of types $A_{1}, A_{3}, A_{5}, A_{7}, D_{4}, D_{6}, D_{8}$. This result will be applied in Section \ref{Sec3} in order to exclude the existence of some possible weak combinatorics of maximizing curves of degree $7$.
\begin{theoremA}
\label{Hirzeb}
Let $\mathcal{C} \subset \mathbb{P}^{2}_{\mathbb{C}}$ be an arrangement consisting of $k\geq 1$ smooth conics and $d\geq 1$ lines with ${\rm deg}(\mathcal{C}) = 2k+d \geq 6$. Assume that $\mathcal{C}$ admits $n_{2}$ nodes, $n_{3}$ ordinary triple points, $t_{3}$ tacnodes, $t_{5}$ singularities of type $A_{5}$, $t_{7}$ singularities of type $A_{7}$, $d_{6}$ singularities of type $D_{6}$, and $d_{8}$ singularities of type $D_{8}$.
Then one has
\begin{equation}
\label{hib}
8k + n_{2} + \frac{3}{4}n_{3} \geq d + \frac{5}{2}t_{3} + 5t_{5} + \frac{29}{4}t_{7} +\frac{13}{8} d_{6} + \frac{15}{4} d_{8}.
\end{equation}
\end{theoremA}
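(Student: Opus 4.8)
The plan is to run the standard orbifold/logarithmic Bogomolov–Miyaoka–Yau machinery for conic-line arrangements, in the spirit of the Langer/Bojanowski approach used by Pokora and collaborators in several earlier papers. First I would pass to the minimal embedded resolution $\pi : \wtilde{X} \to \P^2$ of the arrangement $\mathcal{C}$, or rather to a suitable orbifold structure on $\P^2$ with fractional boundary supported on $\mathcal{C}$. The key input is the logarithmic BMY inequality in the orbifold form
$$\overline{c}_1(\P^2,\Delta)^2 \leq 3\, \overline{c}_2(\P^2,\Delta),$$
valid when the log canonical pair $(\P^2,\Delta)$ is of log general type, where $\Delta = \sum_i \beta_i C_i$ is a $\mathbb{Q}$-divisor with coefficients $\beta_i \in (0,1]$ chosen adapted to the singularities. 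One then expresses both sides in terms of the degree $2k+d$, the numbers of conics $k$ and lines $d$, and local contributions at each singular point of type $A_1,A_3,A_5,A_7,D_4,D_6,D_8$.

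The second step is the bookkeeping of the local correction terms. For each ADE singularity appearing as an intersection of branches of the arrangement one computes the difference between the naive self-intersection count and the true orbifold Euler-characteristic/self-intersection contribution; these are the numbers that get packaged into the coefficients $\tfrac{5}{2}, 5, \tfrac{29}{4}, \tfrac{13}{8}, \tfrac{15}{4}$ in front of $t_3,t_5,t_7,d_6,d_8$ and into the coefficient $8$ in front of $k$ (the $8$ being the well-known contribution of a smooth conic, with self-intersection $4$, entering the $\overline{c}_1^2$ term). A node contributes with coefficient $1$ and an ordinary triple point with coefficient $\tfrac34$, matching the familiar Hirzebruch inequality for line arrangements. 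Concretely I would: (i) resolve each $A_k$ and $D_k$ point, recording the exceptional curves and their discrepancies; (ii) assign the optimal orbifold weights (for line-branches weight $1$, for conic-branches the appropriate fractional weight, and for the exceptional divisors the canonical orbifold weights dictated by the resolution graph); (iii) substitute into $3\overline{c}_2 - \overline{c}_1^2 \geq 0$ and simplify, moving the "good" terms ($8k$, $n_2$, $\tfrac34 n_3$) to the left and the "costly" terms to the right.

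The main obstacle I expect is precisely step (ii)–(iii): verifying that the pair $(\P^2,\Delta)$ is of log general type (equivalently, that $\overline{c}_1$ is big and the BMY hypotheses hold) for the full range of weak combinatorics under consideration, and doing the local computations for $D_6$ and $D_8$ correctly — these have the more intricate resolution graphs and produce the unusual denominators $8$ and $4$. A secondary technical point is handling degenerate configurations (e.g. a conic meeting a line with high tangency, or several branches through one point) so that the resolution is genuinely minimal and no curve is contracted with the wrong multiplicity; the hypothesis $\deg(\mathcal{C}) = 2k+d \geq 6$ is exactly what one needs to guarantee log-general-type and to rule out the low-degree exceptions where BMY fails. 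Once the local tables are pinned down, assembling them into \eqref{hib} is a routine linear computation. I would also double-check the coefficients against the known classification in degree $5$ and against Persson's triconical example plus a line (which should saturate, or nearly saturate, the inequality), as a sanity test on the constants.
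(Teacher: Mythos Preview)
Your overall strategy---apply Langer's orbifold Bogomolov--Miyaoka--Yau inequality to the pair $(\P^2,\alpha C)$ and read off the linear inequality on $(k,d,n_2,n_3,t_3,t_5,t_7,d_6,d_8)$ from the local contributions---is exactly what the paper does. Two points in your execution plan, however, diverge from the paper and would cause trouble if you followed them literally.

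First, the paper does \emph{not} vary the boundary weights by component type; it takes the uniform coefficient $\alpha=\tfrac12$ on every line and every conic, i.e.\ works with $(\P^2,\tfrac12 C)$. Your proposed choice of weight $1$ on line-branches is problematic: already at an ordinary triple point of three concurrent lines the pair $(\P^2,L_1+L_2+L_3)$ fails to be log canonical, so the BMY inequality in Langer's form is not available. The uniform $\tfrac12$ keeps the pair klt at all the ADE points in the list and is what produces the specific right-hand-side constant $\tfrac54(2k+d)^2-\tfrac32(2k+d)$ in the BMY inequality the paper uses.

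Second, the paper does not perform any embedded resolution or discrepancy bookkeeping. It simply quotes Langer's tables of local orbifold Euler numbers $e_{\rm orb}\big(p;\P^2,\tfrac12 C\big)$ for each singularity type, plugs them together with the local Milnor numbers $\mu_p$ into
\[
\sum_{p\in{\rm Sing}(C)} 3\Big(\tfrac12(\mu_p-1)+1-e_{\rm orb}\big(p;\P^2,\tfrac12 C\big)\Big)\ \le\ \tfrac54(2k+d)^2-\tfrac32(2k+d),
\]
and then eliminates $(2k+d)^2$ via the naive intersection count
\[
(2k+d)^2 = d+4k+2\big(n_2+3n_3+2t_3+3t_5+4t_7+4d_6+5d_8\big).
\]
That substitution, rather than any self-intersection of a conic, is where the $8k$ and the $d$ on opposite sides come from. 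So: keep your BMY plan, but set $\alpha=\tfrac12$, cite Langer for the seven local $e_{\rm orb}$ values instead of resolving, and use the B\'ezout-type identity above to finish.
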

\begin{proof}
Let $\mathcal{C} = \{\ell_{1}, ..., \ell_{d}, C_{1}, ..., C_{k}\} \subset \mathbb{P}^{2}_{\mathbb{C}}$ be our arrangement and denote by $C = \ell_{1} + ... + \ell_{d} + C_{1} + ... + C_{k}$ the associated divisor. The proof is very similar as in \cite{JPZ} - we work with the pair $(\mathbb{P}^{2}_{\mathbb{C}}, \frac{1}{2}C)$, and we apply the orbifold BMY-inequality \cite{Langer}, namely
$$(\star\star) \, : \quad \sum_{p \in {\rm Sing}(C)} 3\bigg( \frac{1}{2}(\mu_{p}-1)+1-e_{orb}(p;\mathbb{P}^{2}_{\mathbb{C}}, \frac{1}{2} C)\bigg)\leq  \frac{5}{4}(2k+d)^{2}- \frac{3}{2}(2k+d),$$
where $e_{orb}(p;\mathbb{P}^{2}_{\mathbb{C}},\alpha C)$ is the local orbifold Euler number of a given singularity $p$ and $\mu_{p}$ denotes the local Milnor number of $p$.
Using results devoted to local orbifold Euler numbers from \cite{Langer}, we can estimate the left-hand side of $(\star\star)$ which is equal to
$$\frac{9}{4}n_{2} + \frac{45}{8}t_{2}+\frac{117}{16}n_{3}  + \frac{35}{4}t_{5} + \frac{333}{32}d_{6} + \frac{189}{16}t_{7} + \frac{215}{16}d_{8}.$$
Then using a naive combinatorial count
$$(2k+d)^{2} = d + 4k + 2(n_{2}+3n_{3}+2t_{3} + 3t_{5} + 4t_{7} + 4d_{6} + 5d_{8})$$
and plugging the collected data into the BMY-inequality, we obtain the desired result.
\end{proof}

\vskip 0.5 cm

\bigskip
Izabela Czarnota,
Department of Mathematics,
Pedagogical University of Krakow,
Podchor\c a\.zych 2,
PL-30-084 Krak\'ow, Poland. \\
\nopagebreak
\textit{E-mail address:} \texttt{izabela.czarnota1@student.up.krakow.pl}
\bigskip
\end{document}